\def\U{{\cal U}}
\def\F{{\cal F}}
\def\G{{\cal G}}
\def\D{{\cal D}}
\def\L{{\cal L}}
\newcommand{\RR}{\mbox{I\hspace{-.08cm}R}}
\theoremstyle{plain}
\newtheorem{theorem}{Theorem}
\newtheorem{proposition}[theorem]{Proposition}
\theoremstyle{definition}
\newtheorem{definition}{Definition}
\theoremstyle{remark}
\begin{document}

\title{An Improved Two-Party Negotiation Over Continues Issues Method Secure Against Manipulatory Behavior}

\author{\bfseries\itshape Luca Barzanti\thanks{E-mail address: luca.barzanti@unibo.it} \ and Marcello Mastroleo\thanks{E-mail address: marcello.mastroleo@unibo.it} \\
Department of Mathematics for \\ Economic and Social Sciences\\ University of Bologna \\ Italy}
\date{}
\maketitle

\begin{abstract}
This contribution focuses on two-party negotiation over continuous issues. We firstly prove two drawbacks of the jointly Improving Direction Method (IDM), namely that IDM is not a Strategy-Proof (SP) nor an Information Concealing (IC) method. Thus we prove that the concurrent lack of these two properties implies the actual non-efficiency of IDM. Finally we propose a probabilistic method which is both IC and stochastically SP thus leading to efficient settlements without being affected by manipulatory behaviors.
\end{abstract}

\noindent {\bf Keyword:}
Two-Party Negotiation, Joint Improving Direction Method, Manipulatory Behaviour, Efficient Negotiation Models 


\section{Introduction}
To accomplish suitable settlements, several methods have been proposed in the literature and in practice, in particular the Improving Direction Method (IDM) allows Pareto efficient settlements while negotiating in several continuous contexts. IDM was firstly introduced in \citet{740667} for two-party negotiation over continuous issues, then generalized in \citet{Ehtamo200154} to suite multi-party contexts, finally in \citet{Harri2001} is proven its wide generality.

In Section \ref{Sec:2} we briefly introduce IDM then in Subsection \ref{Sec:3} we show that its deterministic nature constitutes a weakness that can be exploited by a party against the other thus compromising the optimality of the ending settlement. Finally a probabilistic Non Informative Negotiation (NIN) method is proposed, in Section \ref{Sec:4}, so to overcome the IDM's drawbacks. Conclusions are expressed in Section \ref{Sec:5}.

\section{Actual Inefficiency of Two-Party Improving Direction Method}\label{Sec:2}

As firstly presented in \cite{740667}, IDM is thought in relation to the concept of single negotiating text (SNT) which was proposed by R. Fisher during the Camp David negotiations in 1978 (see \cite{FisherCD, FisherUryCD, RaiffaCD}). Following SNT scheme, IDM iterates a refinement algorithm into a feasible real domain until the Pareto frontier is reached. 

We are going to recall IDM in the measure it will be useful for what follows. Thus for a complete description see \cite{740667,Ehtamo200154,Harri2001}. 

Let the {\it negotiation domain} $\D$ be a convex closed subset of $\RR^n$ and let $u_1$, $u_2$ be the utility functions of party 1 and 2 respectively, which are assumed to be differentiable on $\D$, then we can represent IDM negotiation as a {\it fixed point algorithm} which iterates recursively a map $IDM$ consisting of a {\it neighborhood exploring} step $\G$, that addresses the negotiation in the right direction, which is followed by a {\it straight going} step $\L$, that aims to exploit the previously identified direction as far as possible. 

More precisely, $\G(u_1,u_2,x_t)$ is the bisector of the angle formed by the gradients of $u_1(x_t)$ and $u_2(x_t)$:
\begin{equation}\label{Eq:G}
\G(u_1,u_2,x_t) = \frac{\nabla u_1(x_t)} {2||\nabla u_1(x_t)||} + \frac{\nabla u_2(x_t)}{2||\nabla u_2(x_t)||}.
\end{equation}
Whereas $\L$ is the maximum step that can be done in $\G(u_1,u_2,x_t)$ direction without penalizing any of the parties, thus if $\lambda_1$ and $\lambda_2$ are the maximum movements that party 1 and 2 respectively want to do and $\lambda^* = \min(\lambda_1, \lambda_2)$ then, 
\begin{equation}\label{Eq:L}
\L(x_t,\G(u_1,u_2,x_t)) = x_t + \lambda^*_t\cdot \G(u_1,u_2,x_t).
\end{equation}

Note that the algorithm needs an exchange of information between parties and mediator, as authors highlights  in \cite{740667}, ``decision makers reveal minimal private information to the mediator'' which is mostly represented by the local evaluation of utility gradients.

\subsection{Actual Inefficiency of IDM}\label{Sec:3}

The deep analysis of actual exchanged information leads us to identify IDM's actual weaknesses.

\begin{definition} A negotiation method is {\it Strategy-Proof} (SP) if and only if negotiating with the real utility function gains to a party the best ending settlement, regardless the specific utility profile of the others parties.
\end{definition}

\begin{definition}
A negotiation method is {\it Information-Concealing} (IC) if and only if each party is not able to understand other ones utilities by the mediator announcements.
\end{definition}

We prove that IDM fails to be both SP and IC, then its actual inefficiency as the consequence of the conjunction of these two lacks. Therefore we firstly need to exhibit an example of negotiation domain within IDM does not hold any of the two properties. 

We are going to use the abstract negotiation domain whose instance (namely the {\it ``fishing right''} domain described in \citep{FRutilities}) was used in \cite{740667, Ehtamo200154, Harri2001} as the example of IDM effectiveness. Let us, thus, consider for $k\geq0$, the triangular domain 
$$\D = \{(x_1,x_2)\in\RR^2\;|\;x_1,x_2\geq 0\; \wedge\; x_1+x_2\leq k \},$$ 
and the class of utility functions 
$$\U = \{u:\D\rightarrow\RR\;|\;u=\alpha_1\log(x_1)+\alpha_2\log(x_2)+\alpha_3\log(k-x_1-x_2)\wedge \alpha_i\geq0\}.$$
Let us now assume that there is perfect competition between the two negotiating parties $P_1$ and $P_2$, thus the first one has utility $u_1\in\U$ with parameters $(1,0,\beta_1)$ while the latter has utility $u_2\in\U$ with parameters $(0,1,\beta_2)$.

Within this domain, the points $B_1 \equiv (k/(1+\beta_1),0)$ and $B_2 \equiv (0,k/(1+\beta_2))$ correspond to the best utility of $P_1$ and $P_2$ respectively, while the Pareto efficient frontier $\F$ associated to $u_1$ and $u_2$ is the line segment $\overline{B_1B_2}$. Thus according to its optimality, for any $x_0\in\D$, IDM converges on a point $IDM(u_1,u_2,x_0)\in\F$. Moreover we know by the characterization of the efficient frontier that, for all $x\in\F$, $\nabla u_i(x) \parallel \partial\F(x)$.

\begin{theorem}\label{Thm:1}
IDM is not SP.
\end{theorem}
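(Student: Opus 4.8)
The plan is to exhibit a concrete manipulation: show that one party, say $P_1$, can report a utility function different from its true $u_1$ and thereby steer the IDM fixed point to a point on $\F$ that $P_1$ strictly prefers to the settlement reached under truthful reporting. Because the Pareto frontier $\F = \overline{B_1 B_2}$ is fixed independently of the reported gradients (it depends only on the true competitive structure), and because IDM always terminates on $\F$, manipulation cannot change \emph{where} the frontier lies — it can only change \emph{which point of $\F$} is selected. The whole argument therefore reduces to understanding how the bisector map $\G$ in \eqref{Eq:G} determines the landing point as a function of the reported gradient directions.

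First I would make explicit the selection mechanism. At a fixed point $x^* \in \F$ the straight-going step $\L$ can make no further progress, so the direction $\G(u_1,u_2,x^*)$ must be tangent (or opposed) to $\F$; combined with the stated characterization $\nabla u_i(x)\parallel\partial\F(x)$ for $x\in\F$, this pins down the truthful terminal point. The key observation is that $\G$ depends only on the \emph{normalized} gradients $\nabla u_i/\lVert\nabla u_i\rVert$, i.e.\ only on the \emph{directions} $P_1$ and $P_2$ report, not on magnitudes. So if $P_1$ submits gradients pointing in a direction rotated toward $B_1$ relative to the truthful direction, the bisector at each step tilts in $P_1$'s favor, and the iteration converges to a point of $\F$ closer to $B_1$, where $u_1$ (the \emph{true} utility) is larger. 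I would compute, for the specific parameters $(1,0,\beta_1)$ and $(0,1,\beta_2)$, the truthful fixed point $x^*$ explicitly, then perturb $P_1$'s reported $\alpha$-parameters (for instance decreasing the reported $\beta_1$, which rotates the reported gradient toward the $x_1$-axis) and show the new fixed point $x'$ satisfies $u_1(x') > u_1(x^*)$ with the true $u_1$.

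Concretely I would parametrize $\F$ by a scalar $t$, write the true utility $u_1$ restricted to $\F$ as a function $\phi(t)$, locate $t^*$ with the truthful report, and exhibit a misreport producing $t'$ with $\phi(t') > \phi(t^*)$. Since $u_1$ along $\F$ is monotonically improving as one moves toward $B_1$ over the relevant sub-arc, it suffices to show the misreport shifts the landing point in that single direction; a one-parameter family of misreports (varying reported $\beta_1$) should sweep a sub-interval of $\F$, and any member on the $B_1$ side of $x^*$ witnesses the violation of the SP definition.

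The main obstacle I anticipate is the fixed-point analysis of the full $IDM = \L\circ\G$ iteration rather than a single step: a priori the landing point is the limit of a nonlinear recursion, so I must either solve the fixed-point condition in closed form on this log-utility triangle or argue qualitatively that the terminal point depends monotonically and continuously on the reported gradient direction. I expect the cleanest route is the former — exploiting the explicit logarithmic gradients $\nabla u_i$, whose components are simple rational functions of $(x_1,x_2)$, to characterize the fixed point by the tangency condition directly and bypass the iteration entirely, reducing the whole claim to a short monotonicity computation.
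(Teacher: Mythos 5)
Your core premise is false, and it is the load-bearing step of the whole plan. You assert that the Pareto frontier $\F=\overline{B_1B_2}$ is fixed independently of the reported gradients, that IDM terminates on $\F$ regardless of manipulation, and that therefore a misreport ``can only change which point of $\F$ is selected.'' But IDM runs entirely on the \emph{reported} utilities: if $P_1$ declares $v_\gamma$ with $\gamma\neq\beta_1$, the iteration converges to the Pareto frontier of the pair $(v_\gamma,u_2)$, which in this domain is the segment from $(k/(1+\gamma),0)$ to $B_2$ --- a different segment, meeting the true $\F$ only at $B_2$. The paper's own numbers confirm this: with $k=10$, $\beta_1=4$, $\beta_2=7/3$, the truthful settlement $\bar{x}=(1.1410,1.2884)$ satisfies $3x_1+2x_2=6$ (the true frontier through $(2,0)$ and $(0,3)$), while the profitable misreport $\gamma=7/3$ ends at $x'=(1.7435,1.2565)$, which satisfies $x_1+x_2=3$ (the frontier of the \emph{reported} pair, through $(3,0)$ and $(0,3)$) and visibly violates $3x_1+2x_2=6$. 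Indeed, the paper's third theorem (actual inefficiency) rests precisely on the fact that misreports push settlements \emph{off} the true frontier. Consequently your proposed reduction --- parametrize $\F$ by $t$, write the true $u_1$ restricted to $\F$ as $\phi(t)$, and exhibit a misreport landing at $t'$ with $\phi(t')>\phi(t^*)$ --- cannot be carried out: the manipulated landing point corresponds to no value of $t$ at all, so the monotonicity-along-$\F$ argument collapses.

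The repair is close to what the paper actually does, and much of your plan survives it. Keep the deviation you identified (report a smaller $\beta_1$, rotating the declared gradient toward the $x_1$-axis), but characterize the fixed point by the tangency condition with respect to the \emph{reported} utilities, so that $x'(\gamma)$ is located on the reported frontier $\overline{(k/(1+\gamma),0)\,B_2}$; then evaluate the \emph{true} $u_1$ there and show $u_1(x'(\gamma))>u_1(\bar{x})$ for some $\gamma<\beta_1$. The paper short-circuits all of this with a single numerical instance ($\gamma=7/3$ gives $8.3395>8.2290$), which suffices because the SP definition is universally quantified, so one explicit profitable deviation is a complete disproof; your closed-form fixed-point analysis, once corrected as above, would be a stronger (one-parameter family) version of the same counterexample, but the analytic machinery is not needed for the theorem.
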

\begin{proof}
Let us suppose by absurd that IDM is SP, thus for all $\gamma \geq 0$, if $v_\gamma=\log(x_1)+\gamma\log(k-x_1-x_2)$ then, by looking at $P_1$, 
\begin{equation}\label{Eq:Max}
u_1(IDM(v_\gamma,u_2,x_0)) \leq u_1(IDM(u_1,u_2,x_0)),
\end{equation}
where the equality corresponds to the choice $\gamma = \beta_1$. In particular by fixing $k = 10$, maximum utility of $P_1$ in $(2,0)$, of $P_2$ in $(0,3)$ and starting by $x_0 \equiv (5, 4)$ the negotiation sops on the Paretian at $\bar{x}\equiv(1.1410,1.2884)$ and gaining $P_1$ an ending utility of $u_1(\bar{x}) = 8.2290$. Under the same assumptions if $P_1$ declares $v_{7/3}\neq u_1 (= v_{4})$ then the negotiation ends in  $x'\equiv(1.7435, 1.2565)$ corresponding to an ending utility $u_1(x') = 8.3395 > u_1(\bar{x})$ which contradicts the hypothesis.
\end{proof}

\begin{theorem}\label{Thm:2}
IDM is not IC.
\end{theorem}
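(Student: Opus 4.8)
The plan is to establish non-IC constructively, inside the same fishing-right domain $\D$ and family $\U$ already fixed, by showing that party $P_1$ can recover $P_2$'s utility $u_2$ exactly from what IDM necessarily discloses. Recall that $P_2$'s utility in this setting is $u_2 = \log(x_2) + \beta_2\log(k - x_1 - x_2)$, so reconstructing $u_2$ amounts to identifying the single private parameter $\beta_2$. I would therefore argue that one IDM step already leaks enough to pin down $\beta_2$.

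First I would observe that at each iterate $x_t$ the mediator's output reveals the direction of the bisector $\G(u_1,u_2,x_t)$: whether the mediator announces $\G$ outright or merely relocates the single negotiating text to $x_{t+1} = x_t + \lambda^*_t \G(u_1,u_2,x_t)$, the displacement $x_{t+1}-x_t$ is a positive multiple of $\G$, so its direction $\hat d = \G/\|\G\|$ is common knowledge. Since $P_1$ knows its own utility, it also knows $\hat g_1 = \nabla u_1(x_t)/\|\nabla u_1(x_t)\|$ exactly.

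The algebraic core is the recovery of $P_2$'s gradient direction from $\hat d$ and $\hat g_1$. From (\ref{Eq:G}) we have $2\G = \hat g_1 + \hat g_2$ with $\hat g_2 = \nabla u_2(x_t)/\|\nabla u_2(x_t)\|$ a unit vector. If the full vector $\G$ is available then trivially $\hat g_2 = 2\G - \hat g_1$; if only $\hat d$ is available I would recover the scale by imposing $\|2c\,\hat d - \hat g_1\| = 1$ on $\G = c\,\hat d$, which yields $c = \hat d\cdot\hat g_1$ and hence the reflection formula $\hat g_2 = 2(\hat d\cdot\hat g_1)\,\hat d - \hat g_1$. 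Geometrically this just says that $\hat g_2$ is the mirror image of $\hat g_1$ across the bisecting axis, as it must be. Finally I would substitute the explicit $\nabla u_2(x_t) = \big(-\beta_2/s,\; 1/b - \beta_2/s\big)$ at $x_t = (a,b)$, $s = k-a-b$: matching the now-known direction $\hat g_2$ gives one scalar equation whose unique solution is $\beta_2 = r s / (b(r-1))$, where $r$ is the ratio of the two components of $\hat g_2$. Thus $P_1$ determines $\beta_2$, hence $u_2$, from a single announcement, contradicting IC.

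The main obstacle is verifying the non-degeneracy needed for the reconstruction to be well posed, rather than any hard estimate. I would check that at the interior points actually visited by IDM (for instance starting from $x_0=(5,4)$ as in Theorem \ref{Thm:1}) the two gradients are not parallel, so the bisector is genuine and the reflection step is unambiguous, and that the component ratio satisfies $r\neq 1$ and $b\neq 0$, so the final equation for $\beta_2$ has a unique finite solution. These conditions hold generically in the interior of $\D$, so the argument applies at essentially every iterate; exhibiting one such point completes the proof that IDM is not IC.
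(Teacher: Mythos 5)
Your proof is correct and follows essentially the same route as the paper's: you invert equation (\ref{Eq:G}) to recover the unit gradient $\nabla u_2(x_0)/\|\nabla u_2(x_0)\|$ of the other party from the announced bisector, and then exploit the known parametric form of $u_2\in\U$ to solve a linear (parallelism) equation for the single hidden parameter $\beta_2$. Your extra handling of the case where only the \emph{direction} of $\G$ is disclosed (via the reflection formula $\hat g_2 = 2(\hat d\cdot\hat g_1)\hat d - \hat g_1$) and your explicit non-degeneracy checks are refinements the paper's terser proof omits, but the core argument is identical.
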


\begin{proof}
Within the same negotiation domain used in the proof of Theorem \ref{Thm:1} once, without loss of generality, $P_1$ observes the announced $\G(u_1,u_2,x_0)$, he can calculate 
$$v \equiv \frac{\nabla u_2(x_0)}{||\nabla u_2(x_0)||} = 2\G(u_1,u_2,x_0) - \frac{\nabla u_1(x_0)}{||\nabla u_1(x_0)||}$$
by inverting equation \ref{Eq:G}. Knowing $v$, $P_1$ can find the parameter $\beta_2$ which characterizes $u_2$ by solving $(\nabla u_2(x_0), (v_2,v_1)) = 0$ which is linear in $\beta_2$.
\end{proof}

Note that if the estimation of more than one parameter is needed, then $P_1$ can slow the IDM convergence as down as he wants by simply declaring in the $\L$ stage a $\lambda_1$ step sufficiently small, thus he can collect all the $\nabla u_2$ samples he needs to uniquely understand $u_2$.

\begin{theorem}
IDM is actual inefficient when there is a one to one correspondence between utility profiles and Pareto frontiers.
\end{theorem}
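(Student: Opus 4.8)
The plan is to chain the two defects proved in Theorems~\ref{Thm:1} and~\ref{Thm:2}, using the hypothesised bijection between utility profiles and Pareto frontiers as the device that converts a merely \emph{profitable} misdeclaration into a genuinely \emph{non-Paretian} settlement. The structural fact I would lean on throughout is the nominal optimality of IDM recalled above: for \emph{any} declared pair of utilities the iteration of $IDM$ terminates on the Pareto frontier \emph{of the declared pair}. Hence if $P_1$ declares $v\neq u_1$ while $P_2$ declares $u_2$, the endpoint $x^\ast=IDM(v,u_2,x_0)$ sits on the frontier $\F(v,u_2)$ of the \emph{declared} profile, and the whole question reduces to whether $x^\ast$ can also lie on the true frontier $\F=\F(u_1,u_2)$.

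First I would use Theorem~\ref{Thm:2} and the remark following it: because IDM is not IC, by reading the announced bisectors $\G$ and, where a single sample is insufficient, deliberately shrinking the declared $\lambda_1$ in the $\L$ stage to harvest further gradient samples, $P_1$ can reconstruct $u_2$ exactly before committing to a settlement. Thus $P_1$ negotiates in full knowledge of the opponent. Next, knowing $u_2$, I would invoke Theorem~\ref{Thm:1}: since IDM is not SP there is a declaration $v\neq u_1$ with $u_1(x^\ast)>u_1\bigl(IDM(u_1,u_2,x_0)\bigr)$, so a rational $P_1$ adopts $v$ and the realised outcome is $x^\ast$.

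It then remains to certify $x^\ast\notin\F$, and this is where the one-to-one correspondence is brought in. Since the map from profiles to frontiers is injective and $(v,u_2)\neq(u_1,u_2)$, we have $\F(v,u_2)\neq\F$. To upgrade this set-inequality to the pointwise statement $x^\ast\notin\F$ I would use the first-order characterisation recorded before Theorem~\ref{Thm:1}: a point of $\F(v,u_2)$ forces $\nabla v\parallel\nabla u_2$ there, whereas a point of $\F$ forces $\nabla u_1\parallel\nabla u_2$; a point lying on both frontiers would therefore require $\nabla v(x^\ast)\parallel\nabla u_1(x^\ast)$, which fails precisely because $v\neq u_1$ alters the gradient direction at every interior point (in the log-utility instance this is the elementary computation that $\nabla v_\gamma$ and $\nabla u_1$ are collinear only when $\gamma=\beta_1$). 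Hence $x^\ast$ lies off the true frontier, i.e.\ it is strictly Pareto dominated, and IDM delivers an inefficient settlement under informed, rational play.

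The hard part will be exactly this last passage, from ``the two frontiers differ'' to ``this particular settlement point misses $\F$.'' The bijection hypothesis on its own only separates the frontiers as sets, and two distinct frontiers could in principle cross at $x^\ast$; the gradient-collinearity argument is what rules the crossing out, and it is the step that genuinely uses differentiability of the utilities. The manipulated point $x'=(1.7435,1.2565)$ of Theorem~\ref{Thm:1}, which lies strictly inside $\D$ and off the segment $\overline{B_1B_2}$, is the concrete witness that this incompatibility is the rule and not an accident.
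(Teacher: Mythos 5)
Your proposal is correct in its core logic and shares the paper's skeleton --- chain the failure of IC (Theorem \ref{Thm:2}) so that $P_1$ can reconstruct $u_2$, then the failure of SP (Theorem \ref{Thm:1}) to produce a profitable misdeclaration, then use the bijection hypothesis to push the manipulated endpoint off the true frontier --- but the two arguments diverge in scope and in rigor. The paper treats manipulation symmetrically: it considers all four profiles $(u^j_1,u^k_2)$, observes that truth-telling is not a dominant strategy for \emph{either} party, and concludes that the solution of the associated strategic-form game (the Prisoner's-Dilemma-like equilibrium $(u_1^2,u_2^2)$ of Figure \ref{Fig:IDM}) lies in a cell with $(j,k)\not\equiv(1,1)$, each such cell sitting on a frontier $\F_{u^j_1,u^k_2}$ distinct from $\F_{u^1_1,u^1_2}$. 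You analyze only $P_1$'s unilateral deviation with $P_2$ held truthful; that suffices to break truthful play, but the profile that rational symmetric play actually selects is the double deviation, and your collinearity computation --- which compares $\nabla v_\gamma$ against $\nabla u_1$ with $u_2$ fixed --- does not cover that cell; you would need the analogous argument for $(u^2_1,u^2_2)$ versus $(u^1_1,u^1_2)$, which is a genuinely different condition (two parallelism constraints involving four distinct functions). On the other hand, you are more careful than the paper on the step both proofs need: the paper simply \emph{asserts} $IDM(u^j_1,u^k_2,x_0)\notin\F_{u^1_1,u^1_2}$ from ``a change of utility changes the frontier,'' whereas you correctly observe that two distinct frontiers can still intersect (in the triangular domain the frontiers of $(v_\gamma,u_2)$ and $(u_1,u_2)$ even share the endpoint $B_2$) and you close that gap with the first-order collinearity argument at interior points, witnessed numerically by $x'=(1.7435,1.2565)$. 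In short: same strategy, but the paper buys game-theoretic completeness while hand-waving the pointwise exclusion, and you buy pointwise rigor while leaving the actual equilibrium cell unexamined; a fully tight proof would combine the two.
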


\begin{proof}
Each party can exploit the IDM's lack of IC to retrieve other one utility and to take advantage of IDM's lack of SP, so if $u^1_1$ and $u^1_2$ are the parties true utilities while $u^2_1$ and $u^2_2$ are the strategical utilities, then there are four possible ending settlements, according to $P_i$ using $u^1_i$ or $u^2_i$, each of them on a different Pareto efficient frontier $\F_{u^j_1,u^k_2}$. By construction of both $u^2_1$ and $u^2_2$, it results that $u^1_1(IDM(u^2_1,u^1_2,x_0)) > u^1_1(IDM(u^1_1,u^1_2,x_0))$ and $u^1_2(IDM(u^1_1,U^2_2,x_0)) > u^1_2(IDM(u^1_1,u^1_2,x_0))$.

The last two inequalities suffice to say that for both $P_1$ and $P_2$ the action of using their own real utility function is not a dominant strategy, thus $IDM(u^1_1,u^1_2,x_0)$ is not the solution of the non cooperative strategic form game associated to the outcomes.
Moreover whenever a change of utility function corresponds to a change of the Pareto efficient frontier, like in the presented negotiation domain, IDM results actual inefficient in consequence of the fact that if $(j,k)\not\equiv(1,1)$ then $IDM(u^j_1,u^k_2,x_0)\notin\F_{u^1_1,u^1_2}$.
\end{proof}

Figure \ref{Fig:IDM} shows IDM lacks in the triangular domain used in the proof of Theorem \ref{Thm:1}. The four trajectories correspond (from left to right and from the top to the bottom) to $(u_1^1,u_2^2)$, $(u_1^2,u_2^2)$, $(u_1^1,u_2^1)$ and $(u_1^2,u_2^1)$. Inefficiency is then confirmed by looking at the strategic form game associated to the ending utilities which is equivalent to the {\it ``Prisoner's Dilemma''}, whose dominant strategy solution $(u_1^2,u_2^2)$ is well-known suboptimal.

\begin{figure}[htb]
\begin{center}
\includegraphics[width=100mm]{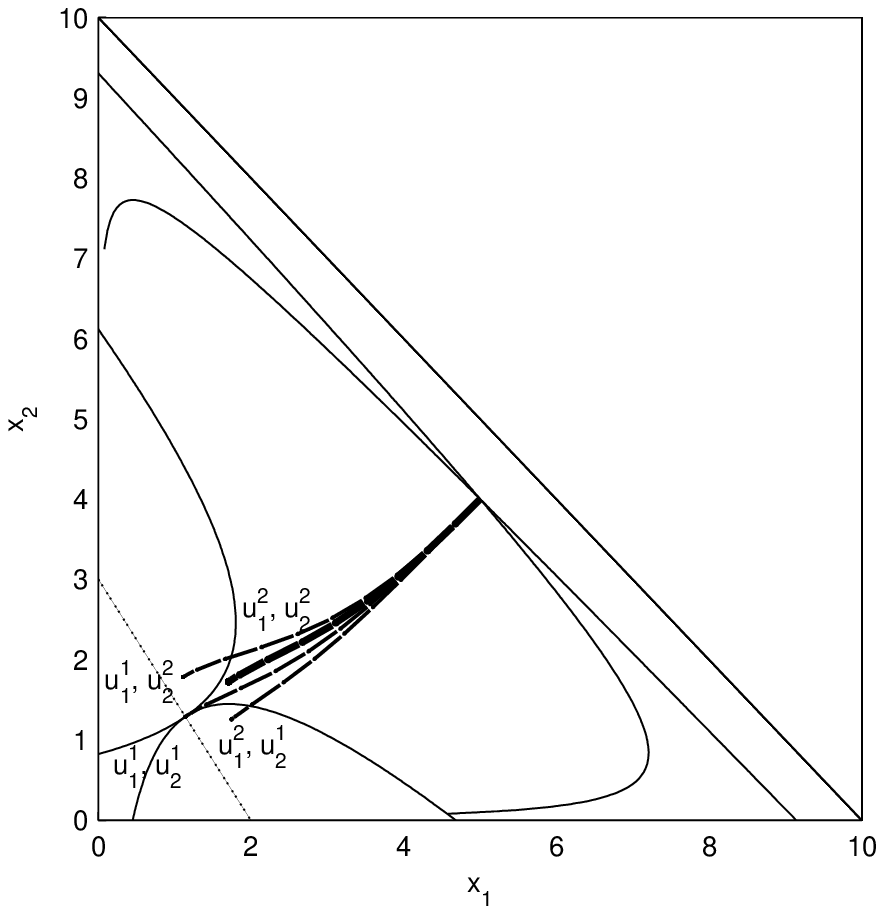}

\begin{tabular}{|c|c|c|}
\hline
 & $u_1^1$ & $u_1^2$ \\
\hline
$u_2^1$ & 8.2290 4.9767 & 8.3395 4.7688\\
\hline
$u_2^2$ & 7.9501 5.1536 & {\bf 8.0642 4.9368}\\
\hline
\end{tabular}

\end{center}
\caption{Example of IDM inefficiency. The thicker trajectory corresponds to $\{u_1^2,u_2^2\} \equiv (8.0642,4.9368)$, {\it i.e.} the dominant strategy solutions  of the {\it ``Prisoner's Dilemma''} like strategic form game, which results by looking at the payoffs table.}\label{Fig:IDM}
\end{figure}

\section{Two-Party Negotiation Method Secure\\Against Manipulatory Behavior.}\label{Sec:4}

In this Section we propose a probabilistic Non Informative Negotiation method (NIN) that follows the SNT scheme but it avoids mutual information leaks. Hence NIN is a fixed point algorithm whose iterated map $NIN:\D\rightarrow\D$ is composed, like IDM's one, by a  {\it neighborhood exploring} step $\G$ and a {\it straight going} step $\L$ but, differently from IDM, $\G$ is based on stochastic answers.

Thereby step by step, each party $P_i$ chooses a random vector $v_i$ which is picked according to a {\it secret} probability distribution with mean $\nabla u_i(x_t)$, then $\G$ is computed as the bisector of the two {\it randomly chosen} directions rather than gradients
\begin{equation}
\G(x_t) = \frac{v_1} {2||v_1||} + \frac{v_1}{2||v_2||}.
\end{equation}

Due to the stochastic nature of parties answers, the condition $NIN(x_t) = x_t$ does not guarantee that $x_t$ is on the Pareto frontier $\F$. Nevertheless the following proposition holds.

\begin{proposition}\label{Prom:NINstop}
If $x$ is on the Pareto frontier $\F$ of $u_1$ and $u_2$ then $$P(NIN(x) = x) = 1,$$ independently of the secret probability functions $\mu_1$ and $\mu_2$ of $P_1$ and $P_2$ respectively. 
\end{proposition}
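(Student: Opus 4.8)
The plan is to turn the probabilistic claim into a deterministic, pointwise one. I would show that for \emph{every} realization of the secretly drawn vectors $v_1,v_2$ the straight-going step $\L$ produces no displacement at a Pareto point; since the event $\{NIN(x)=x\}$ then holds surely for each outcome, it holds with probability one whatever the distributions $\mu_1,\mu_2$ are, which is exactly the asserted independence of $\mu_1$ and $\mu_2$.

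First I would record the geometry at $x$. By the characterization of the efficient frontier recalled above, $\nabla u_1(x)$ and $\nabla u_2(x)$ are both parallel to $\partial\F(x)$; as the parties are in competition the two gradients point in opposite senses, so there is a constant $c>0$ with $\nabla u_1(x)=-c\,\nabla u_2(x)$. In particular the gradients are nonzero, hence for any realization the bisector $\G(x)=\frac{v_1}{2||v_1||}+\frac{v_2}{2||v_2||}$ is well defined, giving a search direction $d=\G(x)$.

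Next, fix a realization and test the two directional derivatives. From $\nabla u_1(x)=-c\,\nabla u_2(x)$ we get $\nabla u_1(x)\cdot d=-c\,\bigl(\nabla u_2(x)\cdot d\bigr)$, so the two quantities have opposite signs or vanish together. Whenever they are nonzero there is a party $P_i$ with $\nabla u_i(x)\cdot d<0$: its utility strictly decreases at first order along $d$, so the largest step it is willing to take is $\lambda_i=0$, whence $\lambda^*=\min(\lambda_1,\lambda_2)=0$ and $NIN(x)=x+\lambda^*d=x$.

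The remaining, and I expect hardest, case is the degenerate one in which $d$ is orthogonal to the common gradient direction, so that $\nabla u_1(x)\cdot d=\nabla u_2(x)\cdot d=0$ and the first-order test is silent. This cannot be dismissed as a null event, since a maliciously chosen discrete $\mu_i$ could place positive mass on such a $d$. Here I would invoke the defining property of a Pareto point, that no feasible direction can strictly increase both utilities at once: hence along $d$ at least one party fails to improve, so its maximal willing step is again $\lambda_i=0$ and $\lambda^*=0$. When the utilities are concave on the convex set $\D$, as in the class $\U$, this is immediate, since a vanishing directional derivative together with concavity makes $\lambda\mapsto u_i(x+\lambda d)$ non-increasing for $\lambda\ge 0$. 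In every case $\lambda^*=0$, so $NIN(x)=x$ for all realizations of $(v_1,v_2)$, and therefore $P(NIN(x)=x)=1$ for arbitrary $\mu_1,\mu_2$.
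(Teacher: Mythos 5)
Your proof is correct and takes essentially the same route as the paper's: for every realization of the drawn directions, the Pareto characterization forces at least one party to declare $\lambda_i = 0$, hence $\lambda^* = 0$ and $NIN(x) = x$ surely, so the probability is $1$ regardless of $\mu_1,\mu_2$. The only difference is that you spell out (via anti-parallel gradients and concavity) the degenerate case where both directional derivatives vanish, which the paper's one-line assertion that any movement ``penalizes at least one party'' leaves implicit.
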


\begin{proof}
From the characterization of $\F$ follows that a movement in the direction $d$, for any $d\in\RR^2$, penalizes at least one party, {\it e.g.} $P_1$. Thus $P_1$ would declare at the $\L$ step $\lambda_1 = 0$ if the direction is $d$. Thus by the arbitrary of $d$ follows that $NIN(x) = x$ always on $\F$.   
\end{proof}

Proposition \ref{Prom:NINstop} can be used to draw the $NIN$ stop condition, {\it i.e.} to fix a maximum number $M$ of consecutive tries to find a new settlement point above which the negotiation is ended. In particular the following proposition leads to $NIN$'s stochastic efficiency.

\begin{proposition}\label{Prop:Exp}
The probability to stop $NIN$ negotiation earlier that reaching the Pareto frontier decays to zero exponentially on $M$.
\end{proposition}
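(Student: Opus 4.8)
The plan is to reduce the statement to a standard geometric–probabilistic estimate: at any point $x\in\D$ that is not on the Pareto frontier $\F$, a single ``try'' of $NIN$ produces a strict improvement with a probability $p(x)$ that is bounded away from $0$, and since the $M$ consecutive tries use fresh independent draws, the probability that they all fail is $(1-p(x))^M$, which is exponentially small in $M$.

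First I would characterise the set of directions along which a try can succeed. Exactly as in the proof of Proposition \ref{Prom:NINstop}, a try whose sampled bisector is $\G(x)$ advances the settlement only if neither party forces its step to zero, i.e. only if $\G(x)$ lies in the open joint-improving cone
\[
C(x)=\{d\in\RR^2 : \nabla u_1(x)\cdot d>0 \ \text{and}\ \nabla u_2(x)\cdot d>0\}.
\]
Because $x\notin\F$ is interior, the two gradients are not anti-parallel, so $C(x)$ is a nonempty open wedge. The key remark is that the \emph{deterministic} IDM bisector $\nabla u_1(x)/(2\|\nabla u_1(x)\|)+\nabla u_2(x)/(2\|\nabla u_2(x)\|)$, which is the mean behaviour of $\G(x)$, points strictly into $C(x)$: writing the unit gradients as $g_1,g_2$ and their angle as $\theta<\pi$, one has $g_i\cdot(g_1+g_2)=1+\cos\theta>0$, which is precisely the property that makes the bisector an improving direction off the frontier.

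Second I would convert this into a lower bound on the per-try success probability. The map $(v_1,v_2)\mapsto v_1/(2\|v_1\|)+v_2/(2\|v_2\|)$ is continuous on $(\RR^2\setminus\{0\})^2$, and at the means $(\nabla u_1(x),\nabla u_2(x))$ it takes a value interior to the open cone $C(x)$; hence there is an open neighbourhood $N$ of $(\nabla u_1(x),\nabla u_2(x))$ whose image lies in $C(x)$. Since $v_1,v_2$ are drawn independently from $\mu_1,\mu_2$ with exactly these means, we get $p(x)=P(\G(x)\in C(x))\ge P((v_1,v_2)\in N)>0$. With $q(x)=1-p(x)<1$, independence of successive draws gives that the probability of $M$ consecutive failed tries at $x$ equals $q(x)^M=\exp(-c\,M)$ with $c=-\log q(x)>0$. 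Taking the supremum of $q(x)$ over the portion of $\D$ lying at distance at least $\delta$ from $\F$ (a compact set on which $p(x)$ is continuous and positive) yields a single rate $p_0(\delta)>0$, so the probability of stopping before reaching $\F$ is at most $(1-p_0(\delta))^M$, which proves the claim.

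The main obstacle is the strict positivity of $P((v_1,v_2)\in N)$: possessing the correct mean does not by itself force a law to place mass near that mean (a distribution supported on a sphere centred at its mean is a counterexample). I would therefore need the mild regularity hypothesis that each $\mu_i$ charges every neighbourhood of its mean with positive probability, equivalently that $\nabla u_i(x)$ lies in the support of $\mu_i$; this is the natural analogue, for the efficiency direction, of the support-free statement of Proposition \ref{Prom:NINstop}. Granting it, the continuity–compactness step delivers the uniform rate and hence the exponential decay; a secondary technical point, handled by the same continuity estimate, is ensuring that $p(x)$ does not degenerate as $x$ ranges over the region bounded away from $\F$, which is exactly why the bound is stated for points at a fixed distance from the frontier rather than uniformly up to it.
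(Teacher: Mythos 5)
Your core step is, in fact, the paper's \emph{entire} proof: the paper justifies Proposition \ref{Prop:Exp} with the single observation that if $\epsilon$ is the probability that one try lands $\G(x)$ in the improving region at $x$, then $M$ consecutive failures occur with probability $(1-\epsilon)^M$. So your route coincides with the paper's at its core, but everything else you add --- the characterization of the success event via the joint-improving cone $C(x)$, the remark that the deterministic bisector satisfies $g_i\cdot(g_1+g_2)=1+\cos\theta>0$, the continuity of $(v_1,v_2)\mapsto v_1/(2\|v_1\|)+v_2/(2\|v_2\|)$, and the compactness argument producing a uniform rate at distance $\delta$ from $\F$ --- is structure the paper does not supply, and it is needed: the paper's $\epsilon$ silently depends on $x$, on the secret laws $\mu_i$, and on an unstated independence assumption across tries.

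The gap you flag is genuine, and it is a gap in the paper's proof, not merely in yours. The proposition claims the decay holds ``independently of the secret probability functions,'' but the mean condition $E_{\mu_i}[v_i]=\nabla u_i(x_t)$ alone does not force $\epsilon>0$. Concretely: take gradients at angles $\pm 80^\circ$, so that the improving cone is the set of directions with angle in $(-10^\circ,10^\circ)$; let $\mu_1$ be a two-point law supported on the rays at $79^\circ$ and $170^\circ$ and $\mu_2$ on the rays at $-9^\circ$ and $-151^\circ$. Each pair of support directions positively spans the corresponding gradient, so the weights and radii can be chosen to match the means exactly; yet the four possible bisectors point at roughly $35^\circ$, $144^\circ$, $80.5^\circ$ and $-170.5^\circ$, all outside the improving cone. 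For such laws the per-try success probability is exactly zero and NIN stalls at a suboptimal point with probability one, so the proposition is false as stated. Your proposed hypothesis --- that $\mu_i$ charges every neighbourhood of its mean, equivalently $\nabla u_i(x)$ lies in the support of $\mu_i$ --- is precisely what restores positivity, and your continuity-plus-compactness step then yields the uniform rate the paper takes for granted. One further point that neither you nor the paper fully resolves: premature stopping can occur at \emph{any} visited suboptimal point, so the global bound really requires a union (or renewal) argument over the trajectory, not just the single-point estimate $(1-p_0(\delta))^M$; this is a second-order issue compared to the positivity of $\epsilon$, but worth recording.
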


\begin{proof}
Regardless the actual sub optimal settlement $x\in\D$, if the probability to have $\G(x)$ in the feasible region of $x$ is $\epsilon$ then the probability to fail $M$ in a row tries to improve $x$ is exactly $(1-\epsilon)^M$.
\end{proof}

The NIN method can be summarized as it follows:
\begin{enumerate}
\item Start at the pre-agreed settlement $x_0$ and set $t = 0$;
\item $r\leftarrow 0$;
\item while $r < M$ do
	\begin{enumerate}
	\item if $NIN(x_t) \neq x_t$ then
	\begin{enumerate}
	\item $x_{t+1} \leftarrow NIN(x_t)$;
	\item $t \leftarrow t+1$;
	\item $r \leftarrow 0$;
	\end{enumerate}
	\item else
	\begin{enumerate}
	\item $r \leftarrow r+1$;
	\end{enumerate} 
	\end{enumerate}
\item return $x^* \equiv x_t$. 
\end{enumerate}

Note that the ``noise'' introduced by the stochastic component of NIN makes both useless the a priori mutual knowledge and impossible the process of deterministically retrieve the other one utility function. Nevertheless an adequate choice of the stopping bound $M$ leads exponentially close to the Pareto frontier as can be seen in Figure \ref{Fig:NINdist}, where the Mean Relative Error 
\begin{equation}
MRE = \sum\limits_{i=1}^n \frac{d(\F,NIN(u_1,u_2,x_0))}{n\cdot d(\F,x_0)}
\end{equation}
is plotted varying $M$, and in Figure \ref{Fig:M} where the Relative Error frequencies are sampled for the first values of $M$. In particular the exponentially decreasing shape of both figures reflects the Proposition \ref{Prop:Exp} and for $M$ as little as 5 the $MRE$ is 0.0161 meaning that NIN reduced the starting distance at the 1.61\% of the original one. Moreover for any $M\geq1$ NIN ends closer to $\F$ than any strategical path of IDM; Figure \ref{Fig:NIN} shows by means of 10 random samples how NIN ends very close to the Pareto frontier, when $M = 5$.

\begin{figure}[htb]
\begin{center}
\includegraphics[width=100mm]{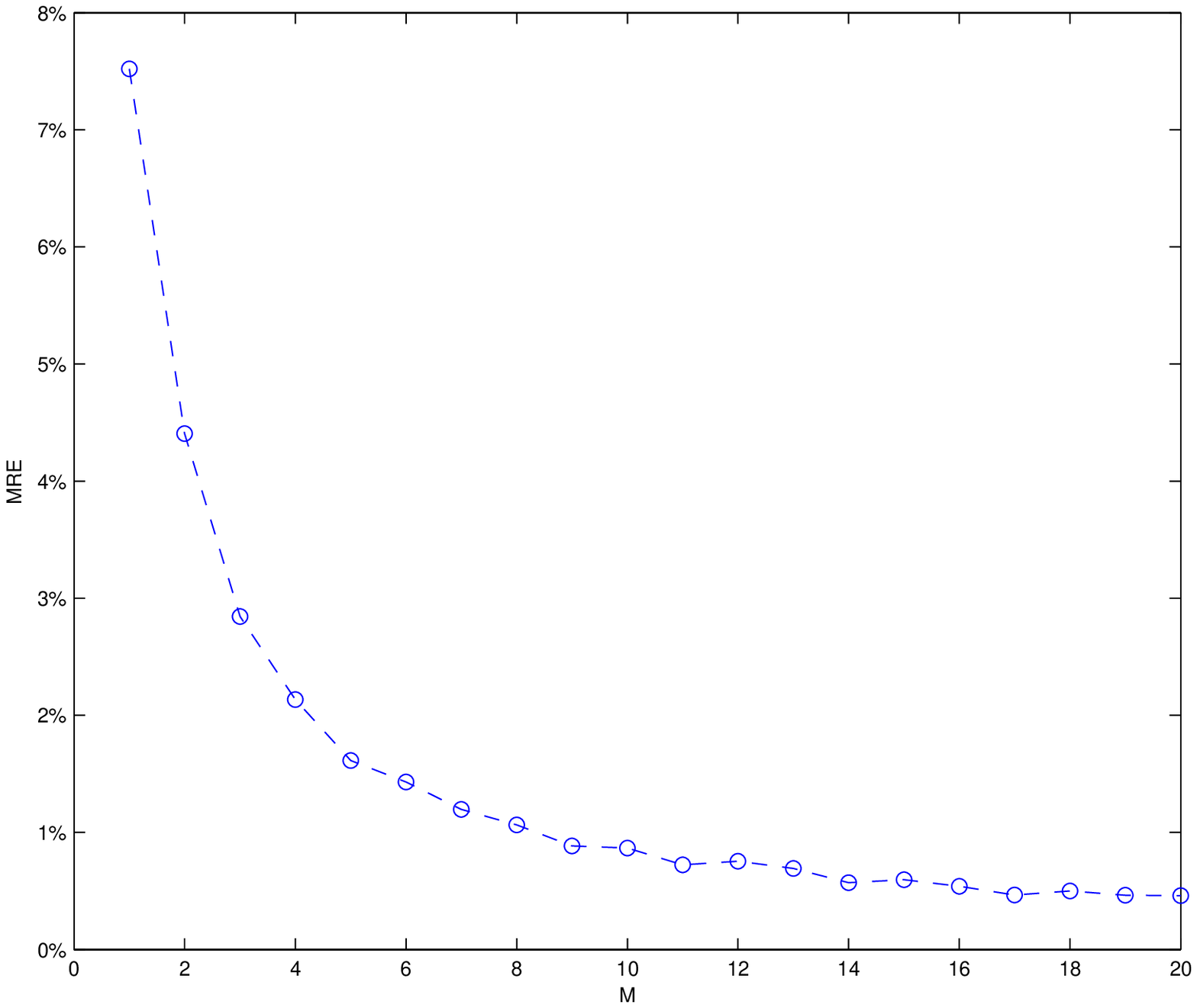}
\end{center}
\caption{MRE varying $M$.}\label{Fig:NINdist}
\end{figure}

\begin{figure}[htb]
\begin{center}
\includegraphics[width=100mm]{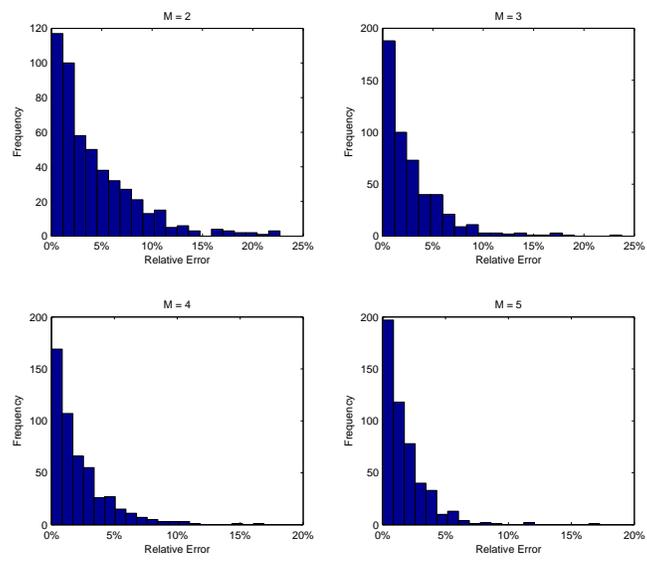}
\end{center}
\caption{Relative Error frequencies for different values of $M$ over 500 samples.}\label{Fig:M}
\end{figure}

\begin{figure}[htb]
\begin{center}
\includegraphics[width=100mm]{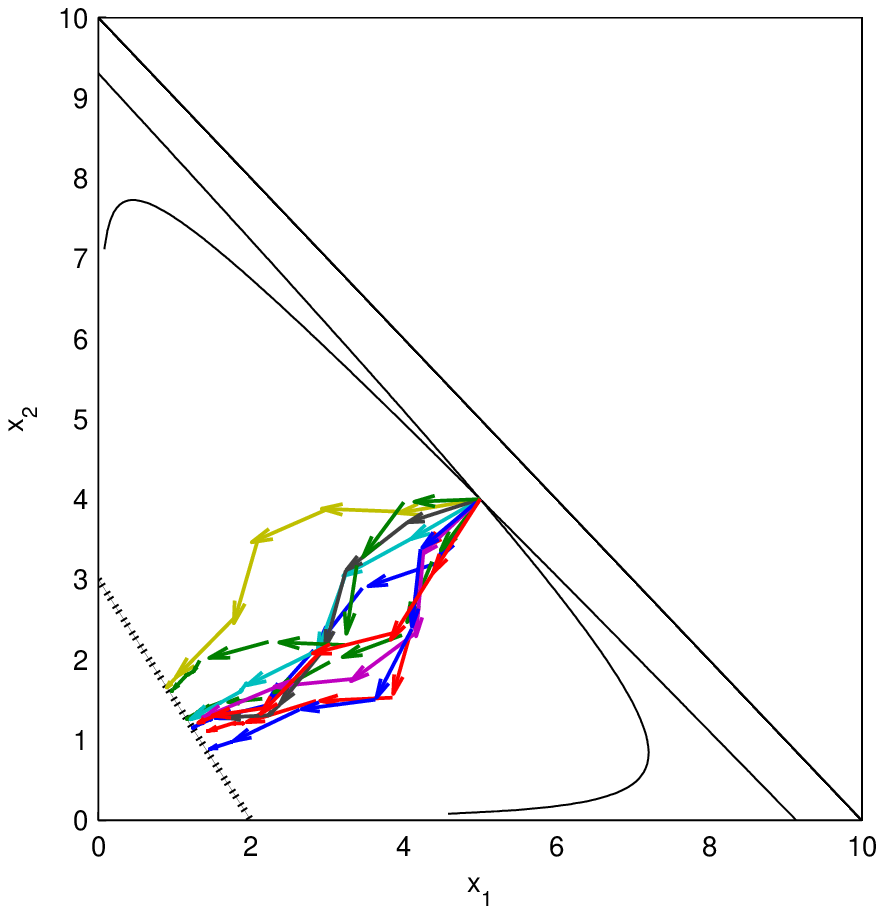}
\end{center}
\caption{10 NIN trajectories with $M = 5$ in the same example of Figure \ref{Fig:IDM}.}\label{Fig:NIN}
\end{figure}

\section{Conclusion}\label{Sec:5}
In this contribution we highlighted two main drawbacks of the IDM negotiation method which affect its global efficiency, namely that IDM is sensible to information of other party utility and that IDM itself conveys mutual knowledge. We then proposed a probabilistic method that overcomes these practical limitations without loosing the efficiency of the ending settlement.


\bibliographystyle{plainnat}
\bibliography{MPN}
\end{document}